\newtheorem{theorem}{Theorem}
\newtheorem{lemma}[theorem]{Lemma}
\newtheorem{question}[theorem]{Question}
\newtheorem{observation}[theorem]{Observation}
\newtheorem{conjecture}[theorem]{Conjecture}
\newtheorem{definition}[theorem]{Definition}
\newcommand{\one}{{\mathbbm 1}}
\renewcommand{\P}{{\mathbb P}}
\newcommand{\Q}{{\mathbb Q}}
\newcommand{\R}{{\mathbb R}}
\newcommand{\Vbar}{{\overline{V}}}
\newcommand{\from}{\mathbin{\vbox{\baselineskip=2pt\lineskiplimit=0pt
                         \hbox{.}\hbox{.}\hbox{.}}}}
\newcommand{\of}{\subseteq}
\newcommand{\dom}{\mathop{\rm dom}}
\newcommand{\plus}{{+}}
\newcommand{\restrict}{\upharpoonright} 
\newcommand{\forces}{\Vdash}
\newcommand{\Card}[1]{{\left|#1\right|}}
\def\<#1>{\langle#1\rangle}
\newcommand{\ORD}{\mathop{{\rm Ord}}}
\newcommand{\REG}{\mathop{{\rm REG}}}
\newcommand{\ZFC}{{\rm ZFC}}
\newcommand{\GCH}{{\rm GCH}}
\newcommand{\HOD}{{\rm HOD}}
\newcommand{\Add}{{\rm Add}}
\newcommand{\len}{{\rm len}}
\newcommand{\term}{{\rm term}}
\newcommand{\scrA}{{\mathcal A}}
\title{Cohen Forcing and Inner Models}
\author{Jonas Reitz}
 \address[Jonas Reitz]{Department of Mathematics, New York City College of Technology, CUNY, 300 Jay Street, Brooklyn, NY 11201}
 \email{jreitz@citytech.cuny.edu}
\begin{document}

\begin{abstract}
Given an inner model $W \subset V$ and a regular cardinal $\kappa$,  we consider two  alternatives for adding a subset to $\kappa$ by forcing: the Cohen poset $\Add(\kappa,1)$, and the Cohen poset of the inner model $\Add(\kappa,1)^W$. The forcing from $W$ will be at least as strong as the forcing from $V$ (in the sense that forcing with the former adds a generic for the latter) if and only if the two posets have the same cardinality.  On the other hand, a sufficient condition is established for the poset from $V$ to fail to be as strong as that from $W$.  The results are generalized to  $\Add(\kappa,\lambda)$, and to iterations of Cohen forcing where the poset at each stage comes from an arbitrary intermediate inner model.
\end{abstract}

\maketitle

\section{Cohen Forcing and Inner Models}

The most well-known method for adding a subset to a regular cardinal $\kappa$ over the universe $V$ is the Cohen partial order 
$\Add(\kappa,1)$, whose conditions consist of binary sequences bounded in 
$\kappa$ and ordered by end extension. Presented with an inner model 
$W$, however, we can consider an alternative: add a subset to $\kappa$ over the universe $V$ using $\Add(\kappa,1)^W$, the Cohen partial order as defined in $W$.  As $W$ may have fewer bounded subsets of kappa than $V$, these posets need not be equal. How do they compare? 

This situation is not unusual in set theory – it arises, for example, in both of the canonical methods for adding subsets to multiple cardinals (in product forcing we always use the poset of the ground model, whereas in iterations we often use the poset of the extension), and in many other inner and outer model constructions. 

In this paper I analyze $\Add(\kappa,1)^W$ and $\Add(\kappa,1)^V$ with regards to their relative forcing strength, driven by the questions:
\begin{question}\label{Question.DoesCohenforcingfromWaddagenericforV}
Does forcing with $\Add(\kappa,1)^W$ add a generic for $\Add(\kappa,1)^V$?
\end{question}
\begin{question}\label{Question.DoesCohenforcingfromVaddagenericforW}Does forcing with $\Add(\kappa,1)^V$ add a generic for $\Add(\kappa,1)^W$?
\end{question}

Surprisingly, it is the poset from the inner model that exhibits the greater strength.  I will offer a complete characterization of when the first question has a positive answer, and establish a sufficient condition for the second question to have a negative answer.  I will also explore generalizations to $\Add(\kappa,\lambda)$ and iterations, and discuss open questions and applications.

Let's begin by recalling the basic facts about Cohen forcing.  There are a number of equivalent formulations -- those given below were selected for clarity in the subsequent proofs.

\begin{definition}[Cohen forcing]For a regular cardinal $\kappa$, Cohen forcing at $\kappa$ is:
$$\Add(\kappa,1) = \left\{p:\alpha\to 2 \mid \alpha<\kappa \right\} = {}^{<\kappa} 2$$
with ordering by extension,  $p\leq q$ if and only if $q\subseteq p$.

More generally, if $\lambda$ is any ordinal, the Cohen forcing to add $\lambda$-many subsets to $\kappa$ is:
$$\Add(\kappa,\lambda) = \left\{p\from \lambda \to {}^{<\kappa} 2 \; \mid \; |p|<\kappa \right\}.$$
In this case, conditions are partial functions ordered by extension on each coordinate as well as on the domain,  $p\leq q$ if and only if $\dom q\subseteq \dom p$ and for all $\alpha \in \dom q$, we have $q(\alpha)\subseteq p(\alpha)$.

Finally, it will be convenient in the case $\lambda=\kappa$ to assume conditions have no gaps in their domains, in particular:
$$\Add(\kappa,\kappa) = \left\{p: \alpha \to {}^{<\kappa} 2 \; \mid \; \alpha<\kappa \right\}.$$
\end{definition}

Note that all types of Cohen forcing satisfy the $\left(2^{<\kappa}\right)^\plus$ chain condition and are $<\kappa$-closed, preserving cardinals outside the interval $[\kappa^+,2^{<\kappa}]$, and in fact will collapse cardinals if and only if $2^{<\kappa}>\kappa$ holds in $V$. However, these standard facts need not a priori apply over $V$ to the inner-model versions $\Add(\kappa,1)^W$ and $\Add(\kappa,\lambda)^W$.

\section{When does Cohen forcing from W add a generic for Cohen forcing from V?}

The main theorem of this paper, Theorem \ref{Theorem.WhenDoesCohenForcingFromWAddAGenericForV}, provides a somewhat surprising answer to Question \ref{Question.DoesCohenforcingfromWaddagenericforV}.  Under very natural size constraints (the posets must have the same cardinality), Cohen forcing from $W$ always adds a generic for Cohen forcing from $V$.

\begin{theorem}\label{Theorem.WhenDoesCohenForcingFromWAddAGenericForV}
Given an inner model $W \subset V$ and $\kappa$ a regular cardinal in $V$, forcing over $V$ with $\Add(\kappa,1)^W$ adds a generic filter for $\Add(\kappa,1)^V$ if and only if $\Card{\Add(\kappa,1)^W}=\Card{\Add(\kappa,1)^V}$.
\end{theorem}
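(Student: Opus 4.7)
I would treat the two directions separately; the $(\Leftarrow)$ direction is the main content.

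\emph{Forward.} The strategy is a chain-condition argument. Suppose $H$ is $V$-generic for $\Add(\kappa,1)^W$ and $G \in V[H]$ is $V$-generic for $\Add(\kappa,1)^V$, with $\Add(\kappa,1)^W$-name $\dot G$. Writing $\mu = \Card{\Add(\kappa,1)^W}$ as computed in $V$, the assignment $p \mapsto \|\check p \in \dot G\|$ is a complete embedding $\iota$ of $\Add(\kappa,1)^V$ into $\mathbb B(\Add(\kappa,1)^W)^V$: it preserves incompatibility because filters contain no incompatible pair, and maximal antichains map to predense (hence maximal) sets by genericity. The Boolean completion inherits the trivial $\mu^+$-chain condition from $\Add(\kappa,1)^W$, so $\Add(\kappa,1)^V$ can admit no antichain of size exceeding $\mu$. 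But for each $\alpha<\kappa$ the sequences of length $\alpha$ form an antichain of size $2^{|\alpha|}$ in $\Add(\kappa,1)^V$, and these sizes are cofinal in $(2^{<\kappa})^V = \Card{\Add(\kappa,1)^V}$. Hence $\Card{\Add(\kappa,1)^V}\leq \mu$, and the reverse inequality is immediate from the inclusion $\Add(\kappa,1)^W \subseteq \Add(\kappa,1)^V$.

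\emph{Backward.} Assume $\Card{\Add(\kappa,1)^W} = \Card{\Add(\kappa,1)^V} =: \lambda$ in $V$, and let $H$ be $V$-generic for $\Add(\kappa,1)^W$ with associated function $h = \bigcup H \colon \kappa \to 2$. A tempting first attempt—taking the filter on $\Add(\kappa,1)^V$ generated by the initial segments of $h$—fails, because a $V$-dense $D \subseteq \Add(\kappa,1)^V$ need not restrict to a dense subset of $\Add(\kappa,1)^W$: a condition in $W$ may have extensions in $D$, yet none of them in $W$.

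My plan, dual to the forward direction, is to construct in $V$ a complete embedding $\iota \colon \Add(\kappa,1)^V \to \mathbb B(\Add(\kappa,1)^W)^V$ and then recover the $V$-generic $G = \{p \in \Add(\kappa,1)^V : \iota(p) \in \bar H\}$ inside $V[H]$, where $\bar H$ is the generic ultrafilter on the target algebra induced by $H$. Producing $\iota$ is the crux. I would try a $\lambda$-length transfinite recursion over an enumeration of $\Add(\kappa,1)^V$, using the cardinality equality at each stage to locate a suitable Boolean value in the target while preserving order and incompatibility. The delicate step is \emph{completeness}—ensuring that the image of every maximal antichain of $\Add(\kappa,1)^V$ is predense in $\mathbb B(\Add(\kappa,1)^W)^V$—since only that property lets the $V$-genericity of $H$ descend to $V$-genericity of $G$.
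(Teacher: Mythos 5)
Your backward direction---which you rightly identify as the main content---stops exactly where the proof has to begin. Saying you would ``try a $\lambda$-length transfinite recursion \dots\ to locate a suitable Boolean value at each stage while preserving order and incompatibility'' names the goal without supplying any mechanism, and the delicate point you yourself flag (completeness) is precisely what such a blind recursion gives you no handle on. The paper does produce the required map, but as a projection $\pi$ from a dense subset of $\Add(\kappa,1)^W$ \emph{onto} $\Add(\kappa,1)^V$ rather than a complete embedding in the other direction: restrict to conditions of odd length whose final digit is $1$, read the even coordinates carrying a $1$ as split points decomposing $p$ into a sequence $\langle p_\alpha \mid \alpha<\gamma\rangle$ ($\gamma<\kappa$) of $W$-conditions, apply an arbitrary bijection $d:\Add(\kappa,1)^W\to\Add(\kappa,1)^V$ coordinatewise, and concatenate the images to form $\pi(p)$. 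What makes the projection property work is that for any $q\le\pi(p)$ the new part $z$ of $q$ is a \emph{single} condition of $\Add(\kappa,1)^V$, so $d^{-1}(z)$ is a single condition of $\Add(\kappa,1)^W$ that can be appended to $p$ as one further coded block. Some such coding device is indispensable here, and your proposal does not contain one.

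Your forward direction also has a gap. The map $p\mapsto\Vert\check p\in\dot G\Vert$ need not be an embedding: nothing prevents $\Vert\check p\in\dot G\Vert=0$ for many $p$ (for instance, the name for the filter generated by the initial segments of $0^\frown g$, with $g$ the Cohen generic, is forced to be $V$-generic for $\Add(\omega,1)$, yet $\Vert\langle 1\rangle\in\dot G\Vert=0$). Consequently an antichain of $\Add(\kappa,1)^V$ maps to pairwise disjoint Boolean values of which at most $\mu$ are nonzero, but arbitrarily many may be $0$, so the $\mu^+$-chain condition does not transfer as you claim. This can be repaired (the intermediate-model theorem yields a genuine complete embedding below some condition, and homogeneity of $\Add(\kappa,1)^V$ finishes the job), but the paper's route is more elementary: if the cardinalities differ then $(2^{<\kappa})^V>(2^{<\kappa})^W\ge\kappa$, so $\Add(\kappa,1)^V$ collapses $(2^{<\kappa})^V$ to $\kappa$, whereas $\Add(\kappa,1)^W$ has the $\bigl((2^{<\kappa})^W\bigr)^+$-chain condition in $V$ and therefore preserves the cardinal $(2^{<\kappa})^V$; hence its extension cannot contain a $V$-generic for $\Add(\kappa,1)^V$.
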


\proof For the forward direction, suppose $\Card{\Add(\kappa,1)^W} \neq \Card{\Add(\kappa,1)^V}$.  Then the forcing from $V$ is, of necessity, strictly larger than the forcing from $W$, that is $(2^{<\kappa})^V > (2^{<\kappa})^W \geq \kappa$.  The forcing from $V$ will collapse $(2^{<\kappa})^V$ to $\kappa$, but the forcing from $W$ is too small to collapse this cardinal.

For the reverse direction, I will construct in $V$ a \emph{projection map} from $\Add(\kappa,1)^W$ to $\Add(\kappa,1)^V$.  Projection maps are a standard notion,  dual to the more common set-theoretic construction of complete embeddings, that allow a generic subset of the domain to be pushed forward into a generic subset of the codomain (see \cite{Jech:SetTheory3rdEdition}).

\begin{definition} If $\P$ and $\Q$ are partial orders then a \emph{projection} from $\P$ to $\Q$ is a map $\pi:\P \to \Q$ such that
\begin{itemize}
    \item $\pi(\one_\P)=\one_\Q$
    \item $\pi$ is order-preserving, and
    \item for all $p\in\P$ and all $q\leq \pi(p)$, there is $p^\prime < p$ with $\pi(p^\prime)\leq q$.
\end{itemize}
\end{definition}

The basic fact about projection maps is:

\begin{theorem}If $\pi:\P \to \Q$ is a projection map and $G\subset \P$ is $V$-generic, then $\overline{\pi^{\prime\prime}G}$ (the upwards closure of the image of $G$) is $V$-generic for $\Q$.\qed
\end{theorem}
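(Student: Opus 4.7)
The plan is to verify the two defining properties of $V$-genericity for $\overline{\pi^{\prime\prime}G}$: that it is a filter on $\Q$, and that it meets every dense subset of $\Q$ lying in $V$. I read $\overline{\pi^{\prime\prime}G}$ as the upward closure in $\Q$ of the image $\pi^{\prime\prime}G = \{\pi(p) : p \in G\}$, i.e.\ the filter generated by $\pi^{\prime\prime}G$.

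For the filter property, the only nontrivial piece is downward directedness. Given $q_1, q_2 \in \overline{\pi^{\prime\prime}G}$, choose $p_1, p_2 \in G$ with $\pi(p_i) \leq q_i$. Since $G$ is a filter on $\P$, there is $p \in G$ with $p \leq p_1$ and $p \leq p_2$, and then order-preservation of $\pi$ gives $\pi(p) \leq \pi(p_i) \leq q_i$ for $i = 1, 2$. Thus $\pi(p) \in \pi^{\prime\prime}G \subseteq \overline{\pi^{\prime\prime}G}$ is a common refinement of $q_1$ and $q_2$. Upward closure holds by construction, and $\one_\Q = \pi(\one_\P) \in \pi^{\prime\prime}G$ since $\one_\P \in G$.

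For genericity, fix a dense $D \subseteq \Q$ with $D \in V$, and set
$$D^{\ast} = \{\, p \in \P : \text{there exists } q \in D \text{ with } \pi(p) \leq q \,\}.$$
The key step is to check that $D^{\ast}$ is dense in $\P$. Given any $p \in \P$, density of $D$ in $\Q$ produces $q \in D$ with $q \leq \pi(p)$, and the third clause in the definition of a projection then produces $p^{\prime} \leq p$ in $\P$ with $\pi(p^{\prime}) \leq q$; this $p^{\prime}$ lies in $D^{\ast}$, witnessed by $q$. Since $D^{\ast} \in V$ and $G$ is $V$-generic for $\P$, there is $p \in G \cap D^{\ast}$, witnessed by some $q \in D$ with $\pi(p) \leq q$. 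Then $\pi(p) \in \pi^{\prime\prime}G$ and $q \geq \pi(p)$, so $q \in \overline{\pi^{\prime\prime}G} \cap D$.

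The main obstacle, insofar as there is one, is mostly bookkeeping around the ordering convention: here $p \leq q$ means $p$ refines $q$, so the third clause of the projection definition is precisely the tool needed to convert a $\Q$-side witness of density into a stronger $\P$-condition. Once this conversion is set up correctly, the remainder is routine.
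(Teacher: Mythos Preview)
Your proof is correct and is the standard argument. The paper itself does not prove this statement at all: it is stated as a basic fact about projections (with a reference to Jech) and closed off with a \qed, so there is nothing to compare against beyond noting that your write-up supplies exactly the routine verification the paper omits.
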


Given a projection map $\pi:\P \to \Q$, in some cases we can find a complete embedding $i:\Q \to \P$ for which $\pi$ is inverse.  However, the following is always the case:

\begin{theorem}If $\pi:\P \to \Q$ is a projection map then $\pi^{-1}$ generates a complete embedding $i:\Q\to ro(\P)$, where $ro(\P)$ is the algebra of regular open subsets of $\P$ (the Boolean completion of $\P$).
\end{theorem}

\proof Let $i(q)$ be the least regular open subset of $\P$ containing $\pi^{-1} (\{q\})$.\qed

To complete the proof of Theorem \ref{Theorem.WhenDoesCohenForcingFromWAddAGenericForV}, we construct a projection map from a dense subset of $\Add(\kappa,1)^W$ to $\Add(\kappa,1)^V$.  Let $\P$ be the set of all conditions in $\Add(\kappa,1)^W$ whose length is an odd ordinal and whose final digit is $1$.  As any condition in $\Add(\kappa,1)^W$ can be extended to a condition of odd length ending in a $1$, this set is dense in (and therefore forcing equivalent to) $\Add(\kappa,1)^W$.

Under the assumption $\Card{\Add(\kappa,1)^W}=\Card{\Add(\kappa,1)^V}$, fix any bijection $d$ from $\Add(\kappa,1)^W$ to $\Add(\kappa,1)^V$ satisfying $d(\emptyset)=\emptyset$.  While $d$ is not assumed to have any other special properties (e.g. $d$ need not preserve $\leq$), it will be used to define the projection map $\pi:\P \to \Add(\kappa,1)^V$. 

Given $p\in\P$, the value $\pi(p)$ will defined in three steps.  First, we decode $p$ into a sequence $\<p_\alpha \mid \alpha<\gamma>$ of length $\gamma<\kappa$ of conditions in $\Add(\kappa,1)^W$, as follows. Each even digit of $p$ with value $1$ identifies a `split point' of $p$.  The odd digits lying between the $\alpha$th and $(\alpha+1)$th split points give the binary sequence $p_\alpha \in \Add(\kappa,1)^W$.  Note that the length $\gamma$ of the sequence is of necessity $\leq \len(p)<\kappa$.

Next, apply the map $d$ to each member of $\<p_\alpha>$ in turn, yielding a sequence of conditions $\<d(p_\alpha)\mid \alpha<\gamma>$ in $\Add(\kappa,1)^V$. Finally, concatenate these conditions to form a single condition $\pi(p)=d(p_0)^\frown d(p_1)^\frown \dots {}^\frown d(p_\alpha)^\frown \dots$ in $\Add(\kappa,1)^V$.

It remains to show that $\pi$ is a projection map.  As each condition in $\P$ ends with a split point (a value $1$ on an even coordinate), extending a condition $p$ to $p^\prime$ will yield a sequence $\<p_\alpha^\prime>$ that includes $\<p_\alpha>$ as an initial subsequence.   Applying $d$ element-wise to $\<p_\alpha^\prime>$ gives a sequence $\<d(p_\alpha^\prime)>$ which extends $\<d(p_\alpha)>$, and the resulting concatenation gives a condition $\pi(p^\prime)$ extending $\pi(p)$.  Thus $\pi$ preserves ordering.

Now fix any $p\in \P$ and any $q \leq \pi(p)$.  We will find a condition $p^\prime$ extending $p$ such that $\pi(p^\prime) = q$.   Let $z$ be the part of $q$ that lies above $\dom(\pi(p))$, so that $q=\pi(p)^\frown z$.  This is a binary sequence of length $<\kappa$ in $V$, so it lies in $\Add(\kappa,1)^V$.  Therefore we can apply $d^{-1}(z)=x \in \Add(\kappa,1)^W$.  Now extend $p$ to $p^\prime$ by concatenating $p$ with the string consisting of the digits of $x$ on the odd coordinates, and all $0$s on the even coordinates, followed by a $1$ on a final even coordinate.  Clearly $p^\prime < p$, and by construction $\pi(p^\prime) = q$.  Thus $\pi$ is a projection map. \qed

\section{Term forcing for $\Add(\kappa,1)$}

Let us consider Theorem \ref{Theorem.WhenDoesCohenForcingFromWAddAGenericForV} in the event that $V=W[H]$ itself arises as a forcing extension of $W$ by a partial order $\P \in W$.  In this case, Cohen forcing over $V$ at $\kappa$ will yield a forcing extension of $W$, either by the two-step product $\P\times\Add(\kappa,1)^W$ (if the ground model Cohen forcing is used), or by the two-stage iteration $\P*\dot\Add(\kappa,1)$ (if the Cohen forcing of the extension $V$ is used).  Provided $\P$ does not add too many small subsets to $\kappa$, Theorem \ref{Theorem.WhenDoesCohenForcingFromWAddAGenericForV} shows that the extension $W[H\times G]$ by the product contains an extension $W[H* G^\prime]$ by the iteration.  However, the commutative property of products allows us to rearrange the order of the forcing $\P\times\Add(\kappa,1)^W$, forcing with $\Add(\kappa,1)^W$ first and then $\P$, to obtain the final model $W[G][H]$.  
The iteration $\P*\dot\Add(\kappa,1)$ does not share this commutative property, but this observation provides a kind of alternative:  If we want to rearrange the order of the iteration $\P*\dot\Add(\kappa,1)$, we can accomplish it by forcing first with $\Add(\kappa,1)^W$ and then with $\P$.  This yields an extension containing (though not necessarily equal to) an extension by the iteration $\P*\dot\Add(\kappa,1)$, in which the forcing by $\P$ was performed in the final stage rather than the initial stage.  This  behavior suggests a connection with term forcing (or termspace forcing - the nomenclature varies in the literature).  The following account of term forcing is taken from \cite{HamkinsWoodin:NMPccc}.

\begin{definition}Suppose $\P$ is a partial order and $\dot\Q$ is a $\P$-name for a partial order.  The \emph{term forcing $\Q_\term$ for $\dot\Q$ over $\P$} consists of conditions $q$ such that $\forces_\P q\in\dot\Q$, with the order $p \leq_{\Q_\term} q$ if and only if $\forces_\P p \leq_{\dot\Q} q$.
\end{definition}

Note that $\Q_\term$ may be a proper class, as $V^\P$ contains many names for identical elements of $\dot\Q$.  This problem can be avoided by restricting $\Q_\term$ to a full set $B$ of names, so that for any $\P$-name $q$ with $\forces_\P q\in\dot\Q$ there is $p\in B$ with $\forces_\P q=p$.  As such a $B$ forms a dense subset of $\Q_\term$ it is forcing equivalent, and so we will assume without loss of generality that $\Q_\term$ is a set.

The standard fact about term forcing, providing a kind of ``commutativity for iterations,'' is as follows.

\begin{lemma}\label{Lemma.TermForcingBasics}Suppose that $H_\term \subset \Q_\term$ is $V$-generic for the term forcing of $\dot\Q$ over $\P$ and $\Vbar$ is any model of set theory with $V[H_\term] \subset \Vbar$.  If there is a $V$-generic filter $G\subset \P$ in $\Vbar$, then there is a $V[G]$-generic filter $H\subset \Q = \dot\Q_G$ in $\Vbar$.
\end{lemma}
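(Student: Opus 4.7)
The plan is to set $H := \{r \in \Q : \exists q \in H_\term,\ q_G \leq_\Q r\}$, the upward closure in $\Q$ of $\{q_G : q \in H_\term\}$, where $q_G$ denotes the $G$-interpretation of the name $q$. This set is visible in $\Vbar$ since both $H_\term$ and $G$ are. I need to check two things: that $H$ is a filter in $\Q$, and that $H$ is $V[G]$-generic.

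The filter property is routine. Upward closure is built in. For downward directedness, given $r_1, r_2 \in H$ witnessed by $q_1, q_2 \in H_\term$ with $q_{i,G} \leq r_i$, use the filter property of $H_\term$ to choose $q_3 \in H_\term$ with $q_3 \leq_{\Q_\term} q_1, q_2$. By definition of the term order, $\forces_\P q_3 \leq_{\dot\Q} q_i$, so $q_{3,G} \leq q_{i,G} \leq r_i$ in $\Q$, placing $q_{3,G}$ into $H$ below both $r_1$ and $r_2$.

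The heart of the argument is genericity. Given a dense $D \in V[G]$ with $\P$-name $\dot D$, fix $p_0 \in G$ forcing ``$\dot D$ is dense in $\dot\Q$.'' By redefining $\dot D$ above $p_0$ to be $\dot\Q$, without disturbing its $G$-interpretation, I may assume $\one_\P$ forces $\dot D$ to be dense in $\dot\Q$. Now put
$$E := \{q \in \Q_\term : \one_\P \forces_\P q \in \dot D\} \in V.$$
The critical claim is that $E$ is dense in $\Q_\term$. For any $q_0 \in \Q_\term$, since $\one$ forces $\dot D$ to be dense and $q_0 \in \dot\Q$, the maximal principle yields a $\P$-name $\dot r$ with $\one \forces \dot r \leq_{\dot\Q} q_0 \wedge \dot r \in \dot D$; fullness of the set of names defining $\Q_\term$ then produces some $q \in \Q_\term$ with $\one \forces q = \dot r$, and this $q$ extends $q_0$ in $\Q_\term$ and lies in $E$. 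By $V$-genericity of $H_\term$, pick $q \in H_\term \cap E$; then $q_G \in D$ and $q_G \in H$, so $H \cap D \neq \emptyset$.

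The main obstacle is the density claim for $E$: one must transfer the density of $D$ in $\Q$ as seen in $V[G]$ into the density of $E$ in $\Q_\term$ as seen in $V$. This is the essential content of term forcing, and rests on precisely two standard tools --- the maximal principle, to collect the forced existence of an extension into a single name, and the fullness of the set of names used in $\Q_\term$, which keeps that name inside the term poset. Everything else is bookkeeping with interpretations.
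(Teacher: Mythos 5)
Your proof is correct and is the standard argument for this lemma, which the paper states without proof as a known fact about term forcing. The two key points --- pushing $H_\term$ forward via $q \mapsto q_G$ and taking the upward closure in $\Q$, and using the maximal principle together with fullness of the set of names underlying $\Q_\term$ to turn density of $\dot D$ in $\dot\Q$ into density of $E$ in $\Q_\term$ --- are exactly the right ones.
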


In the case of small forcing followed by Cohen forcing, Theorem \ref{Theorem.CohenForcingIsTermForcing} says that the term forcing is closely related to the ground model Cohen forcing, and in many cases they are forcing equivalent.  Before stating the theorem, we will need an additional fact about Cohen forcing and its variants (our thanks to the referee for abstracting this lemma from an earlier proof of Theorem \ref{Theorem.CohenForcingIsTermForcing}).

\begin{lemma}\label{Lemma.CohenForcingAndVariantsAreEquivalent} Suppose $\kappa$ is regular, $\lambda$ is a cardinal with $\lambda \leq 2^\delta$ for some $\delta<\kappa$.  Let $\P={}^{<\kappa} 2=\Add(\kappa,1)$ and $\Q={}^{<\kappa} \lambda$ (in each case, sequences are ordered by extension).  Then there are projection maps $\pi_0 : \Q \to \P$ and $\pi_1 : \P \to \Q$.  Furthermore, if $\lambda=2^\delta$ then there is a dense embedding from $\P$ to $\Q$ (and hence $\P$ and $\Q$ are forcing equivalent).
\end{lemma}

\proof Fix $\kappa,\lambda$ as in the lemma.  For $\pi_0$, fix any nonconstant function $d_0:\lambda\to 2$.  For $q\in\Q$ and $\beta\in \dom(q)$, let $\pi_0(q)(\beta)=d_0(q(\beta))$, so $\pi_0(q)$ will be a binary sequence of the same length as $q$.  Establishing that $\pi_0$ is a projection map follows from the observation that extending a condition $q$ yields an extension to $\pi_0(q)$, and any extension of $\pi_0(q)$ can be realized as $\pi_0(q^\prime)$ for an appropriate extension $q^\prime$ of $q$.

To construct the map $\pi_1$, we will without loss of generality restrict $\P$ to conditions whose domain is a multiple of $\delta$, as such conditions are dense in $\P$.  Fix any onto function $d_1\from {}^\delta 2 \to \lambda$.  For $p\in\P$ with $\dom(p)=\delta\cdot\alpha$ and $\beta<\alpha$, let $p_\beta$ be the $\beta$th $\delta$-block of $p$, that is, $p_\beta$ is the binary $\delta$-sequence appearing in $p$ on the interval $[\delta\cdot\beta,\delta\cdot(\beta+1))$.  For each such $\beta$, let $\pi_1(p)(\beta)=d_1(p_\beta)$.  Thus $\pi_1(p)$ will be a member of ${}^{<\kappa} \lambda$ of length $\alpha$.  It is a straightforward matter to show that $\pi_1$ is a projection map.

In the event that $\lambda=2^\delta$, we can choose our map $d_1$ to be a bijection.  In this case, the map $\pi_1$ will preserve $\perp$ as well as $\parallel$.  As $\pi_1$ is onto, we conclude that it is a dense embedding and therefore $\P$ and $\Q$ are forcing equivalent. \qed

\begin{theorem}\label{Theorem.CohenForcingIsTermForcing}Suppose $\kappa$ is regular and $\P$ is a partial order with $|\P|<\kappa$.  Let $\dot\Q$ be a $\P$-name for $\Add(\kappa,1)^{V^\P}$, and $\Q_\term$ be the term forcing for $\dot\Q$ over $\P$.
Then there are projection maps $\pi_0 : \Q_\term \to \Add(\kappa,1)$ and $\pi_1:\Add(\kappa,1) \to \Q_\term$ (where $\Add(\kappa,1)$ refers to the Cohen forcing defined in the ground model).  Furthermore, if $\scrA$ is the set of equivalence classes of antichains of $\P$ under the relation ``has a common refinement,'' then $|\scrA|=2^\delta$ for some $\delta<\kappa$ implies that there is a dense embedding from $\Q_\term$ to $\Add(\kappa,1)$ (and hence they are forcing equivalent\footnote{I take forcing equivalence of $\P$ and $\Q$ to mean that every $\P$-forcing extension can be realized as a $\Q$-forcing extension, and vice versa.}).
\end{theorem}

\proof  Fix $\P$, $\kappa$ as in the theorem.  If $A$ is the set of antichains of $\P$, we consider the natural equivalence relation on $A$ given by $a\sim_\P b$ if and only if $a$ and $b$ have a common refinement (in forcing terms, $a\sim_\P b$ if and only if any generic filter $G\subset \P$ meeting one of them necessarily meets the other).  Let $\scrA$ be the set of equivalence classes of $\sim_\P$.  Note that $|\scrA|\leq 2^{|\P|}$ and $|\P|<\kappa$.

Without loss of generality, let us take $\dot\Q$ to consist of nice $\P$-names for bounded subsets of $\kappa$, so $\dot\Q$ can be identified with $\{q : \alpha \to \scrA \mid \alpha < \kappa\}$.  It follows that $\Q_\term$ can be taken of the same form, and in fact that we can take $\Q_\term$ to have the same underlying set as $\dot\Q$ (though the ordering relations may differ). Thus $\Q_\term$ can be identified with ${}^{<\kappa} \scrA$.  Taking $\lambda = |\scrA|$, the conclusion of the theorem follows from  Lemma \ref{Lemma.CohenForcingAndVariantsAreEquivalent}.\qed

We conclude that in the case $|\scrA|=2^\delta$ the term forcing for $\Add(\kappa,1)^{V^\P}$ over small forcing $\P$ \emph{is} the ground model forcing $\Add(\kappa,1)$. It is natural to wonder whether the requirement $|\scrA|=2^\delta$ could be dropped, and still obtain forcing equivalence.  If $|\scrA|$ does not lie in the range of the power function $\alpha \mapsto 2^\alpha$, we still obtain projection maps in both directions between $\Q_\term$ and $\Add(\kappa,1)$.  Thus, given a generic extension $V[G]$ by $\Q_\term$ we can find an intermediate model $V[H]$ where $H=\overline{\pi_0^{\prime\prime}G}$, which is an extension by $\Add(\kappa,1)$.  In $V[H]$ we can obtain yet another intermediate model $V[G^\star]$ where $G^\star = \overline{\pi_1^{\prime\prime}H}$, an extension by $\Q_\term$.  However, defining the maps $\pi_0$ and $\pi_1$ as above will yield $V[G^\star] \subsetneq V[G]$.  It is an open question whether some alternative strategy could demonstrate forcing equivalence in this case.

\section{Extending to $\Add(\kappa,\lambda)$}

\begin{theorem}\label{Theorem.AddKappaLambdaProjectionMap}
If $\kappa$ is a regular cardinal in $V$, $\lambda$ is an ordinal, and $W\subset V$ is an inner model satisfying:
\begin{enumerate}
    \item $|\Add(\kappa,1)^W|=|\Add(\kappa,1)|$, 
    \item $|\Add(\kappa,\lambda)^W|=|\Add(\kappa,\lambda)|$, and 
    \item $W\subset V$ satisfies the $\kappa$-cover property for subsets of $\lambda$ \emph{(that is, any $A\subset \lambda$, $A\in V$ with $|A|<\kappa$ is contained in some $B \in W$ with $|B|^W<\kappa$)}.
\end{enumerate}
then there is a projection map $\pi$ from a dense subset of $\Add(\kappa,\lambda)^W$ to $\Add(\kappa,\lambda)$.
\end{theorem}

\begin{proof}
Recall that conditions $p\in \Add(\kappa,\lambda)$ are partial functions $p\from \lambda \to {}^{<\kappa}2$ with $|p|<\kappa$.   Note that any such function is uniquely determined by the domain $A \subset \lambda$ (where $A$ has order type $\gamma<\kappa$) and a member of $\Add(\kappa,\kappa)$, that is, a sequence $\<p_\alpha \in {}^{<\kappa}2 \mid \alpha < \gamma>$ giving the value of $p$ on the $\alpha^\text{th}$ member of the domain $A$.  

To produce our projection map we will begin with a map $d:\Add(\kappa,1)^W \to \Add(\kappa,1)$.  In contrast to the proof of Theorem \ref{Theorem.WhenDoesCohenForcingFromWAddAGenericForV}, however, $d$ will not be a bijection but a many-to-one map -- the intention is to provide a great deal of latitude in choosing preimages for conditions in $\Add(\kappa,1)$.  We construct $d$ as follows:

Let $\<a_\alpha> \in W$ and $\<b_\alpha>$ be sequences of the same length satisfying:
\begin{itemize}
    \item $\<a_\alpha>$ enumerates the conditions in $\Add(\kappa,1)^W$,
    \item each $b_\alpha \in \Add(\kappa,1)$, 
    \item $a_0 = b_0 = \emptyset$, and
    \item every member of $\Add(\kappa,\kappa)$ appears as a block in $\<b_\alpha>$ -- that is, for any $p:\gamma \to {}^{<\kappa}2$ with $\gamma<\kappa$ there is $\alpha_p$ such that $b_{\alpha_p + \beta} = p(\beta)$ for every $\beta<\gamma$.
\end{itemize}
Note that such sequences exist by hypothesis, as $|\Add(\kappa,1)^W|=|\Add(\kappa,1)|=|\Add(\kappa,\kappa)|$.  Define $d(a_\alpha)=b_\alpha$ for all $\alpha$.

Let $\P\subseteq \Add(\kappa,\lambda)^W$ be the set of all conditions $p$ satisfying for all $\beta\in \dom(p)$, $p(\beta)$ is a member of $(2^{<\kappa})^W$ of odd ordinal length whose final digit is $1$.  $\P$ is dense in $\Add(\kappa,\lambda)^W$, as any individual $p(\beta)$ can easily be extended to such a sequence.  To define $\pi:\P \to \Add(\kappa,\lambda)$ we apply the strategy from the proof of Theorem \ref{Theorem.WhenDoesCohenForcingFromWAddAGenericForV} coordinate-by-coordinate.  For $p\in \P$ and $\beta \in \dom(p)$, we construct $\pi(p)(\beta)$ by first decoding $p(\beta)$ into a sequence $\<p(\beta)_\alpha>$ of conditions in $\Add(\kappa,1)^W$ (using the even digits with value 1 to determine split points in the sequence of odd digits), applying the map $d$ to each $p(
\beta)_\alpha$ in turn, and concatenating the results to obtain $\pi(p)(\beta)=d(p(\beta)_0)^\frown d(p(\beta)_1)^\frown \dots {}^\frown d(p(\beta)_\alpha)^\frown \dots \in \Add(\kappa,1)$.   Note that $\pi(p)$ will have the same domain as $p$.

To see that $\pi$ is a projection map, note first that $\pi$ is order-preserving, as it is order-preserving on each coordinate.  Now fix $p\in \P$ and $q\leq \pi(p)$.  We will construct $p^\prime \leq p$ with $\pi(p^\prime)\leq q$.  First, by hypothesis (3) (cover property) we can extend $q$ to $q^\prime$ with $\dom(q^\prime) \in W$ (without loss of generality we can also assume that $q^\prime(\beta)$ properly extends $\pi(p)(\beta)$ for all $\beta\in\dom(\pi(p))$.  Now let $z$ be the difference between $\pi(p)$ and $q^\prime$ -- that is, for each $\beta \in \dom(q^\prime)$, let $z(\beta) \in {}^{<\kappa}2$ be the unique sequence such that $q^\prime(\beta)=\pi(p)(\beta)^\frown z(\beta)$.  Here we would like to use the map $d$ to pull $z$ coordinate-wise back into a condition in $\Add(\kappa,\lambda)^W$, but we must take a little care -- although we can easily select a $d$-preimage of each $z(\beta)$, it is not clear that the collection of these preimages will be a set in $W$.  However, recall that $z$ is determined by its domain together with a member of $\Add(\kappa,\kappa)$, and every member of $\Add(\kappa,\kappa)$ appears as a block in the $\<b_\alpha>$ sequence.  We simply take $x\in \Add(\kappa,\kappa)^W$ to be the corresponding block in the $\<a_\alpha>$ sequence.  This block lies in $W$ and from it, together with $\dom(z)$ (which lies in $W$ by construction), we can define a condition $x^\prime \in \Add(\kappa,\lambda)^W$ that maps coordinate-wise under $d$ to $z$.  We now construct $p^\prime \leq p$:  for each $\beta \in \dom(z)$, we extend $p(\beta)$ to $p^\prime(\beta)$ by concatenating $p(\beta)$ with the string consisting of the digits of $x^\prime(\beta)$ on the odd coordinates, and all $0$s on the even coordinates, followed by a $1$ on a final even coordinate.  Clearly $p^\prime < p$, and by construction $\pi(p^\prime) = q^\prime \leq q$.
\end{proof}


\section{Generalized Cohen Iterations}

We now consider the repeated application of Cohen forcing at distinct cardinals, which arises in many  applications and generally appears in one of two basic forms, products and iterations.
While products can be considered a special case of iterations (in which the forcing at stage $\alpha$ always consists of a poset from the ground model), they are often treated separately.  In the case of proper class products and iterations, $\ZFC$-preservation requires additional assumptions - in many `nice' cases, these take the form of factoring assumptions in which the tail forcing satisfies a progressive closure condition (details below) - but the precise formulation of these assumptions varies between products and iterations.  We will use the ideas from previous sections to analyze a broad new class of forcing notions, iterations in which each stage consists of Cohen forcing taken from an arbitrary model intermediate between the ground model and the extension.  These lack the progressive closure conditions of typical products and iterations, but nonetheless preserve $\ZFC$.  We  begin with a brief discussion of products and iterations.

Easton's celebrated theorem on controlling the continuum function $\kappa \mapsto 2^\kappa$ for regular cardinals $\kappa$ provides a canonical example of a class product of Cohen partial orders.  Given $\GCH$ in $V$, a class $I$ of regular cardinals, and an appropriate function $\kappa \mapsto \lambda_\kappa$ with domain $I$, the Easton product $\Pi_{\kappa\in I} \Add(\kappa,\lambda_\kappa)$ allows tight control over the continuum function on $I$ while preserving $\ZFC$.  Preservation of $\ZFC$, especially Power Set and Replacement, are proved using a factoring property for products, namely, that for arbitrarily large $\kappa$ the product factors as $\P\times\Q$ where $\P$ is set forcing with the $\kappa^+$-c.c. and $\Q$ is $\leq\kappa$-closed (in the ground model $V$).  In \cite{Reitz2006:Dissertation} I refer to such forcings as \emph{progressively closed products}, and crucially it is the $V$-closure of the second factor that allows us to prove preservation of Power Set and Replacement.

The complementary theorem, forcing the $\GCH$ to hold in the extension regardless of the behavior of the continuum function in $V$, provides a similar canonical example for iterations of Cohen forcing.  This is achieved via the Easton support iteration $\<\P_\kappa, \dot\Q_\kappa \mid \kappa \in \REG>$ in which we force at stage $\kappa$ to add a single Cohen subset using the partial order as defined in the extension up to $\kappa$, that is, $\P_\kappa \forces \dot\Q_\kappa = \dot\Add(\check\kappa,1)$.  While this forcing may collapse cardinals, it will nonetheless preserve $\ZFC$ in the extension, and once again it is a factoring property that allows us to show preservation of Power Set and Replacement.  Here, we have that for arbitrarily large $\kappa$ we can factor the iteration as $\P_\kappa * \P_\text{tail}$ where $\P_\kappa \forces \P_\text{tail} \text{ is }<\kappa\text{-closed}$.  These are termed \emph{progressively closed iterations} in \cite{Reitz2006:Dissertation}, and though similar to the case of progressively closed products, they differ in that the tail forcing satisfies a closure property in the extension  $V^{\P_\kappa}$, rather than in the ground model $V$.

However, not every repetition of Cohen forcing is a progressively closed product or iteration.  For example, suppose we divide the regular cardinals into $\ORD$-many disjoint classes $\REG = \bigsqcup_{\alpha \in \ORD} I_\alpha$ and for each class $I_\alpha$ we consider the progressively closed iteration adding a Cohen subset to each cardinal in $I_\alpha$.  We then take the class product of these iterations for all $\alpha\in\ORD$.  It is natural to view this class-product-of-class-iterations as a single class iteration of set forcing along the regular cardinals, forcing at stage $\kappa$ with the poset $\Add(\kappa,1)$ as defined in the inner model obtained by including generics only at stages in $I_\alpha \cap \kappa$, where $I_\alpha$ is the unique class containing $\kappa$.  Unfortunately, from this perspective the forcing is not 
a progressively closed iteration (the forcing at stage $\kappa$ may no longer be $<\kappa$-closed, since there may be bounded subsets of $\kappa$ added by earlier stages not in $I_\alpha\cap\kappa$). Thus, the analysis of this forcing (factoring arguments, preservation of $\ZFC$ and of cardinals, and so on) requires a new approach.

In this section we consider iterations of Cohen forcing for which, at each stage $\kappa$, we force with the poset $\Add(\kappa,\lambda)$ as defined over some intermediate inner model.

\begin{definition}\label{Definition.GeneralizedCohenIteration}Suppose $\P = \<\P_\kappa, \dot\Q_\kappa \mid \kappa \in I>$ is an iteration along a class $I$ of regular cardinals with either full (set) support or Easton support.  Then $\P$ is a \emph{generalized Cohen iteration} provided, for each $\kappa \in I$,

\begin{enumerate}
    \item $\dot\Q_\kappa$ is a full $\P_\kappa$-name for a partial order and $\P_\kappa \forces \dot\Q_\kappa = \dot\Add(\kappa, \lambda_\kappa)^{V^{\R_\kappa}}$, where
    \item $\R_\kappa$ is a (possibly trivial) complete suborder of $\P_\kappa$
   \item $\displaystyle \R_\kappa \forces \left|\check\Add(\check\kappa,1)\right| = \left|\dot\Add(\kappa, 1)^{V^{\R_\kappa}}\right|$, \\ 
    $\displaystyle \R_\kappa \forces \left|\check\Add(\kappa,\lambda_\kappa)\right| = \left|\dot\Add(\kappa, \lambda_\kappa)^{V^{\R_\kappa}}\right|$, and \\
    $\displaystyle \R_\kappa \forces \check{V}\subseteq V^{\R_\kappa}$ satisfies the $\kappa$-cover property for subsets of $\lambda_\kappa$.
\end{enumerate}
\end{definition}

\begin{theorem}\label{Theorem.GeneralizedCohenIterations}Suppose $\P$ is a generalized Cohen iteration. 
Then there is a projection map $\pi:\Pi_{\kappa\in I} \Add(\kappa,\lambda_\kappa)\to\P$, where  $\Pi_{\kappa\in I} \Add(\kappa,\lambda_\kappa)$ is the ground model class product (with the same support as $\P$). Furthermore, 
\begin{enumerate}
    \item $\pi$ restricted to an initial segment of the product maps to the corresponding initial segment of $\P$, that is 
    $$\pi_\delta : \Pi_{\kappa \in I\cap\delta} \Add(\kappa,\lambda_\kappa) \to \P_\delta=\left<\P_\kappa, \dot\Q_\kappa \mid \kappa \in I\cap\delta\right>$$ is a projection map, and for $p\in\P$ we have $\pi(p\restrict\delta)=\pi(p)\restrict\delta$
    \item $\P$ is a \emph{progressively distributive iteration}, i.e. for each $\delta \in I$ we can factor $\P \cong \P_\delta * \P_\text{tail}$ where $\P_\text{tail}$ is a $\P_\delta$-name for the tail forcing and $\P_\delta \forces \P_\text{tail}$ is $<\delta$-distributive, and thus
    \item  $\P$ preserves \ZFC.
\end{enumerate}
\end{theorem}

\proof
The projection map $\pi$ is constructed level-by-level, using the results from the previous section.  
Suppose we have completed the construction up to some $\delta$, so we have a projection map $\pi_\delta : \Pi_{\kappa \in I\cap\delta} \Add(\kappa,\lambda_\kappa) \to \left<\P_\kappa, \dot\Q_\kappa \mid \kappa \in I\cap\delta\right>$.  To extend $\pi_\delta$ to conditions with domain $\subseteq \delta+1$, we consider the following sequence of models:
$$V\subseteq V^{\R_\delta} \subseteq V^{\P_\delta} \subseteq V^{\Pi_{\kappa \in I\cap\delta}\Add(\kappa,\lambda_\kappa)}$$
 In each case the containment comes from either a projection map or a complete embedding.  We will show that forcing with the ground model poset $\Add(\delta,\lambda_\delta)$ over the largest model $V^{\Pi_{\kappa \in I\cap\delta}\Add(\kappa,\lambda_\kappa)}$ will add a generic for $\dot\Q_\delta = \dot\Add(\delta, \lambda_\delta)^{V^{\R_\delta}}$.  To accomplish this, we first obtain a projection map $f:\Add(\delta, \lambda_\delta)^V \to \dot\Add(\delta, \lambda_\delta)^{V^{\R_\delta}}$ in the model $V^{\R_\delta}$, and then argue that $f$ remains a projection map in $V^{\Pi_{\kappa \in I\cap\delta}\Add(\kappa,\lambda_\kappa)}$.

To obtain $f$, we apply Theorem \ref{Theorem.AddKappaLambdaProjectionMap} to the partial order $\dot{\Q_\delta}=\dot\Add(\delta, \lambda_\delta)^{V^{\R_\delta}}$ in the model $V^{\R_\delta}$ with regards to the inner model $V \subseteq V^{\R_\delta}$ - the hypotheses of the theorem are satisfied in this model exactly by the cardinality and cover property assumptions of Definition \ref{Definition.GeneralizedCohenIteration}.(3).  Thus there is, in $V^{\R_\delta}$, a projection map $f:\Add(\delta,\lambda_\delta)^V\to\dot{\Q_\delta}$.  As being a projection map depends only on $\Delta_0$ properties of the function, domain, and codomain, it follows that $f$ remains a projection map in the larger model $V^{\Pi_{\kappa \in I\cap\delta}\Add(\kappa,\lambda_\kappa)}$.  Fix a $\Pi_{\kappa \in I\cap\delta}\Add(\kappa,\lambda_\kappa)$-name $\dot{f}$ for $f$ such that $\one_{\Pi_{\kappa \in I\cap\delta}\Add(\kappa,\lambda_\kappa)} \forces \dot{f} \from \Add(\delta,\lambda_\delta) \to \dot{\Q_\delta}$ is a projection map.\footnote{This statement is not strictly correct as written, since the target $\dot{\Q_\delta}$ is not a $\Pi_{\kappa \in I\cap\delta}\Add(\kappa,\lambda_\kappa)$-name but rather a $\P_\delta$-name for an object in $V^{\R_\delta}$.  However, it follows from the properties of the projection map $\pi_\delta$ that every object $a \in V^{\P_\delta}$ has a name $a^\star\in V^{\Pi_{\kappa \in I\cap\delta}\Add(\kappa,\lambda_\kappa)}$, so that if $G\subseteq \Pi_{\kappa \in I\cap\delta}\Add(\kappa,\lambda_\kappa)$ is generic then valuating $a^\star$ by $G$ yields the same result as valuating $a$ by $\overline{\pi_\delta^{\prime\prime}G}$.  In the interest of clarity I will suppress the ${}^\star$ notation and treat $a$ and $a^\star$ as the same object, denoted $a$.}

Now fix $p\in \P_{\delta+1}$.  Let $\pi_{\delta+1}(p) = \pi_\delta(p\restrict \delta)^\frown \dot q$, where $\dot q\in \dom \dot\Q_\delta$ is selected so that $\one_{\Pi_{\kappa \in I\cap\delta}\Add(\kappa,\lambda_\kappa)} \forces \dot{f}(p(\delta))=\dot q$ (fullness of the name $\dot\Q_\delta$ guarantees that such a $\dot q$ will exist in $\dom(\dot\Q_\delta)$).

That $\pi_{\delta+1}$ is a projection map follows from the fact that $\dot{f}$ is forced to be projection map in the extension by  $\Pi_{\kappa \in I\cap\delta}\Add(\kappa,\lambda_\kappa)$.

For the limit case, suppose we have completed the construction up to some limit $\delta$, so for each $\alpha<\delta$ we have a projection map $\pi_\alpha : \Pi_{\kappa \in I\cap\alpha} \Add(\kappa,\lambda_\kappa) \to \P_\alpha$, and furthermore the $\pi_\alpha$ cohere in the sense of hypothesis (1).  Then for $p\in\P_\delta$, we can define $\pi_\delta(p) = \bigcup_{\alpha<\delta} \pi_\alpha (p\restrict \alpha)$.  It is a straightforward matter to verify that $\pi_\delta$ is a projection map as desired.  Thus the construction can be carried out inductively through the ordinals, yielding a a projection map $\pi\from\Pi_{\kappa \in I}\Add(\kappa,\lambda_\kappa)\to \P$ satisfying condition (1) of the theorem.  

To establish progressive distributivity, we fix $\delta\in I$ and factor $\P=\P_\delta*\P_\text{tail}$.  To see that $\P_\delta \forces \P_\text{tail}$ is $<\delta$-distributive, we factor the product as $\Pi_{\kappa \in I}\Add(\kappa,\lambda_\kappa) \cong \Pi_{\kappa \in I\cap\delta}\Add(\kappa,\lambda_\kappa) \times \Pi_{\kappa \in I\setminus \delta}\Add(\kappa,\lambda_\kappa)$ and observe that a counterexample to distributivity for $\P_\text{tail}$ can be pulled back via the $\pi$ map to yield a counterexample to distributivity for $\Pi_{\kappa \in I\setminus \delta}\Add(\kappa,\lambda_\kappa)$.  It is a well-known fact from the theory of product forcing that the latter product is $<\delta$-distributive, and so such a counterexample cannot exist.

Preservation of $\ZFC$ follows from the progressive distributivity condition, requiring only minor adjustments to the corresponding proof for progressively closed iterations \cite{Reitz2006:Dissertation} (details of the necessary adjustments appear in \cite{ReitzWilliams:InnerMantlesandIteratedHOD}.) 
\qed

The strategy above can be generalized to a deal with a variety of iterations that are constructed from Cohen `building blocks' but do not strictly satisfy definition \ref{Definition.GeneralizedCohenIteration}.  For example, an iteration that forces at each stage with the lottery sum of several Cohen posets (allowing the generic to choose, for example, whether to force $\GCH$ or its negation at $\kappa$) can be similarly embedded into a ground model product by replacing each term in the lottery sum with the corresponding Cohen forcing from the ground model -- provided we can verify that conditions (1)-(3) of definition \ref{Definition.GeneralizedCohenIteration} hold for each term in the lottery at each stage.  Several examples of this type can be found in the proofs of the main theorems of \cite{ReitzWilliams:InnerMantlesandIteratedHOD}.

\section{When does Cohen forcing from V add a generic for Cohen forcing from W?}\label{Section.WhendoesCohenforcingfromVaddagenericforW}

We turn our attention back to Question \ref{Question.DoesCohenforcingfromVaddagenericforW}, focusing on $\Add(\kappa,1)^V$.  Under what conditions does this forcing add a generic for $\Add(\kappa,1)^W$?  Here, the situation is less clear and the results are mostly negative.  Let us begin with a condition which ensures that the Cohen forcing from $V$ does \emph{not} add a generic for Cohen forcing from $W$.  Hamkins isolated the following useful relationship between models of set theory:

\begin{definition}\cite{Hamkins2003:ExtensionsWithApproximationAndCoverProperties} Suppose
that $W\subseteq V$ are both transitive models of (a
suitable small fragment of) \ZFC\ and $\delta$ is a
cardinal in $V$. Then the extension $W\of V$ exhibits the \emph{$\delta$-approximation property} if whenever $A\in
     V$ with $A\subseteq W$ and $A\cap B\in W$ for all
     $B\in W$ with $|B|^{W} < \delta$, then $A\in W$.
\end{definition}


A variety of common extensions satisfy this property, including a great many forcing extensions.


\begin{lemma}{\cite[lemma~13]{Hamkins2003:ExtensionsWithApproximationAndCoverProperties}}\label{Lemma.ClosurePointForcing}
If $W\subset W[H]$ is a forcing extension by forcing of
the form $\Q_1*\dot\Q_2$, where $\Q_1$ is nontrivial and
$\forces_{\Q_1}$``\hspace{2 pt}$\dot\Q_2$ is ${\leq}|\Q_1|$
strategically closed'', then $W\of W[H]$ satisfies the
$\delta$-approximation property for any
$\delta\geq|\Q_1|^{+}$.
\end{lemma}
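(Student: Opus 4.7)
The plan is to prove the contrapositive: given $A \in W[H] \setminus W$ with $A$ a set of ordinals, produce $B \in W$ of size ${\leq}|\Q_1|$ with $A \cap B \notin W$. The essential feature to exploit is that neither factor of $\Q_1 * \dot\Q_2$ individually satisfies the approximation property -- small forcing such as $\Q_1$ adds sets all of whose small approximations lie in $W$ (for instance, a Cohen real over a countable model) -- so the strategic closure of the second factor must be used to bridge across the approximation gap.

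The argument proceeds by a two-generics construction. Fix a $\Q_1 * \dot\Q_2$-name $\dot A \in W$ for $A$ and write $H = H_1 * H_2$. In a sufficiently large outer forcing extension of $W[H]$, force with $\Q_1$ over $W$ to obtain a second $\Q_1$-generic $H_1^\prime$ mutually generic with $H_1$. Above $W[H_1, H_1^\prime]$, invoke the winning strategy for Player II witnessing the ${\leq}|\Q_1|$-strategic closure of $\dot\Q_2[H_1^\prime]$ to build a generic $H_2^\prime$, and interpret $A^\prime = \dot A[H_1^\prime * H_2^\prime]$. The strategy is deployed so that $A^\prime$ agrees with $A$ on coordinates whose Boolean value is already fixed below a condition compatible with both $H_1$ and $H_1^\prime$; by nontriviality of $\Q_1$, however, $H_1 \neq H_1^\prime$ and so $A$ and $A^\prime$ must disagree on some ordinals. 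A ground-model analysis of nice names in $\Q_1 * \dot\Q_2$ -- exploiting that each antichain of $\Q_1$ has cardinality ${\leq}|\Q_1|$ -- extracts a $B \in W$ of size ${\leq}|\Q_1|$ containing the support of these possible disagreements. Since $A$ and $A^\prime$ are both realizations of $\dot A$ by legitimate $W$-generics of $\Q_1 * \dot\Q_2$ yet differ on $B$, the restriction $A \cap B$ cannot lie in $W$, contradicting the approximation hypothesis.

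The main obstacle lies in the coordinated construction of $H_2^\prime$ alongside the extraction of $B$ in $W$: the strategy for $\dot\Q_2$ must be invoked with enough care to keep $H_2^\prime$ aligned with the structure of $\dot A$ along the $H_1^\prime$-leg (rather than merely producing some unrelated generic), while simultaneously ensuring that $B$ has cardinality ${\leq}|\Q_1|$ and lies in $W$ rather than only $W[H_1]$. A secondary technical concern is absoluteness: the auxiliary outer forcing used to produce mutually generic $H_1, H_1^\prime$ must not alter the truth value of ``$A \cap B \in W$,'' which follows from the fact that membership in $W$ is absolute between intermediate transitive models of $\ZFC$.
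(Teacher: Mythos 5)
The paper does not actually prove this lemma; it is quoted verbatim from Hamkins's approximation-and-cover paper and used as a black box, so there is no in-paper proof to compare against. Measured against the known argument, your sketch correctly identifies the main ingredients --- two generics for $\Q_1$, the strategy for $\dot\Q_2$, and the production of a small set $B$ witnessing a failed approximation --- but the step that carries the entire weight of the lemma is missing, and one asserted inference is false as stated. From $H_1 \neq H_1'$ and nontriviality of $\Q_1$ you cannot conclude that $A$ and $A'$ disagree somewhere: distinct generics routinely give the same interpretation of a name (they always do when $\dot A$ names a ground-model set, which is exactly the case you must rule out). To get $A \neq A'$ you need something like mutual genericity of the \emph{full} generics $H_1 * H_2$ and $H_1' * H_2'$, and that is not arranged by your construction, since $H_2'$ is produced by playing a strategy rather than by forcing; conversely, if you force to make $H_2'$ fully generic over $W[H_1']$, the strategy does no work and you lose all control relating $A'$ to $A$.

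The second, deeper gap is the extraction of $B$. A nice name for $A$ lives on antichains of the whole iteration $\Q_1 * \dot\Q_2$, which can be large, so ``the support of the possible disagreements'' has no reason to have size ${\leq}|\Q_1|$ or to be computable in $W$. The actual proof handles this with a fusion argument carried out entirely in $W$: one enumerates the (pairs of incompatible) conditions of $\Q_1$ in order type $|\Q_1|$ and builds a single descending sequence of length $|\Q_1|$ in the term forcing for $\dot\Q_2$, using the $\leq|\Q_1|$-strategic closure to continue through limits, at each stage forcing a disagreement about one new ordinal below the two $\Q_1$-legs; the set $B$ of ordinals touched then has size $\leq|\Q_1|$, lies in $W$, and the terminal condition forces $\dot A \cap \check B \notin W$, contradicting the approximation hypothesis. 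Your proposal explicitly flags this coordination as ``the main obstacle'' rather than resolving it, so as written it is an outline of where a proof should go rather than a proof.
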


We say that such forcing has a closure point at $|\Q_1|$.

For the purpose of addressing Question \ref{Question.DoesCohenforcingfromVaddagenericforW}, note that a generic $G\subset\kappa$ for $\Add(\kappa,1)^W$ serves as a counterexample to the $\kappa$-approximation property (such a $G$ will be a subset of $\kappa$ all of whose initial segments, and hence all of whose small approximations, lie in $W$).  We make the following observation.

\begin{observation}[A sufficient condition]Suppose $W\subset V$ are transitive models of \ZFC, $\kappa$ is regular, and $G\subset \kappa$ is generic for $\Add(\kappa,1)^V$.  If $W\subset V[G]$ satisfies the $\kappa$-approximation property, then the forcing did not add a generic for $\Add(\kappa,1)^W$.
\end{observation}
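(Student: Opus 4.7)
The plan is to argue by contradiction. Suppose $H \in V[G]$ is a filter that is $W$-generic for $\Add(\kappa,1)^W$, and let $A = \bigcup H \subseteq \kappa$ be the corresponding generic subset of $\kappa$. The strategy is to show that $A$ witnesses a failure of the $\kappa$-approximation property for $W \subseteq V[G]$, contradicting the hypothesis.

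First I would verify the three ingredients required to make $A$ such a witness. (i) Since $H$ is $W$-generic for a nontrivial forcing, $H \notin W$, and since $A$ and $H$ are mutually definable in an absolute way, $A \notin W$; nonetheless $A \in V[G]$ and $A \subseteq \kappa \subseteq W$. (ii) Every proper initial segment $A \cap \alpha$ for $\alpha < \kappa$ equals a condition in $H$, hence lies in $\Add(\kappa,1)^W \subseteq W$. (iii) For any $B \in W$ with $|B|^W < \kappa$, I need $A \cap B \in W$. Here I would use that $\kappa$, being regular in $V$, is also regular in $W$ (any cofinal map from a smaller ordinal in $W$ would still be cofinal in $V$). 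Consequently $B \cap \kappa$ has $W$-cardinality less than $\kappa$ and is therefore bounded in $\kappa$ by some $\alpha < \kappa$; then $A \cap B = (A \cap \alpha) \cap B$, which lies in $W$ by (ii) since both factors do.

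Combining (i)--(iii), $A \subseteq W$ with $A \in V[G] \setminus W$ and $A \cap B \in W$ for every $B \in W$ of $W$-cardinality less than $\kappa$, directly contradicting the $\kappa$-approximation property of the extension $W \subseteq V[G]$. Therefore no such $H$ exists, and $\Add(\kappa,1)^V$ does not add a $W$-generic for $\Add(\kappa,1)^W$.

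There is no real obstacle here; the argument is a direct unpacking of definitions. The only step that requires a moment's care is the downward absoluteness of regularity from $V$ to $W$, which is needed to convert the cardinality bound $|B|^W < \kappa$ into the boundedness of $B \cap \kappa$ inside $\kappa$, thereby reducing arbitrary small approximations to initial-segment approximations of $A$.
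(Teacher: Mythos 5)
Your proof is correct and is exactly the paper's argument: the paper dispenses with the observation via the one-line remark that a generic for $\Add(\kappa,1)^W$ is a subset of $\kappa$ all of whose initial segments, and hence all of whose small approximations, lie in $W$, so it violates $\kappa$-approximation. Your write-up just fills in the details of that remark (in particular the reduction of arbitrary small approximations to initial segments via the downward absoluteness of regularity), so it matches the intended proof.
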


This will happen, for example, when the extension $W\subset V$ is a forcing extension by forcing which has a closure point below $P$.

\begin{lemma}\label{Lemma.AfterclosurepointsforcingCohenforcingfromVdoesntaddagenericforW}If $W \subset V$ is an extension by forcing with a closure point below $\kappa$, then forcing with $\Add(\kappa,1)^V$ does not add a generic for $\Add(\kappa,1)^W$.
\end{lemma}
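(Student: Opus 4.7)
The plan is to invoke the observation just stated, which reduces the lemma to showing that $W \subseteq V[G]$ satisfies the $\kappa$-approximation property whenever $G$ is $V$-generic for $\Add(\kappa,1)^V$. To establish the approximation property I would appeal to Lemma \ref{Lemma.ClosurePointForcing}, which demands that $W \subseteq V[G]$ itself arise from a two-step iteration with a closure point at some $\mu < \kappa$.

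By hypothesis $V = W[H]$ where $H$ is $W$-generic for a forcing of the form $\Q_1 * \dot\Q_2$ with $|\Q_1| = \mu < \kappa$ and $\forces_{\Q_1}$``$\dot\Q_2$ is $\leq \mu$-strategically closed''.  The key idea is to absorb the Cohen forcing at $\kappa$ into the tail: I would view $V[G]$ as the $W$-extension by the three-step iteration $\Q_1 * \dot\Q_2 * \dot\Add(\check\kappa,1)$, and then regroup this as a two-step iteration with first factor $\Q_1$ and tail $\dot\Q_2 * \dot\Add(\check\kappa,1)$.

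The main technical point to verify is that this tail is forced by $\Q_1$ to be $\leq \mu$-strategically closed over $W^{\Q_1}$.  Its first stage $\dot\Q_2$ already satisfies this by hypothesis; its second stage $\dot\Add(\check\kappa,1)$ is $<\kappa$-closed, and hence $\leq \mu$-closed, in its ground model (since $\mu < \kappa$); and a two-step iteration of strategically closed forcings is strategically closed.  With this in hand, Lemma \ref{Lemma.ClosurePointForcing} applied to the extension $W \subseteq V[G]$ with closure point at $\mu$ yields the $\delta$-approximation property for every $\delta \geq \mu^+$, in particular for $\delta = \kappa$.  The observation then completes the argument.  The only place where care is needed is the iteration of strategic closure, but since the honest $<\kappa$-closure of $\Add(\kappa,1)^V$ is much stronger than what the tail requires, no real subtlety arises.
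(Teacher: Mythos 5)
Your proposal is correct and follows essentially the same route as the paper: regroup $\Q_1 * \dot\Q_2 * \dot\Add(\check\kappa,1)$ as a two-step iteration with first factor $\Q_1$, observe that the tail remains $\leq|\Q_1|$-strategically closed because $\Add(\kappa,1)^V$ is $<\kappa$-closed, and then apply Lemma \ref{Lemma.ClosurePointForcing} together with the preceding observation. The only difference is that you spell out the intermediate steps (the approximation property for $\delta = \kappa \geq \mu^+$ and the iteration of strategic closure) slightly more explicitly than the paper does.
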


\begin{proof}If $W \subset V$ is an extension by forcing $\Q_1*\dot \Q_2$ with a closure point below $\kappa$, then the forcing $\Q_1 * (\dot \Q_2*\dot\Add(\kappa,1))$ also has a closure point below $\kappa$, since $\Add(\kappa,1)$ is $<\kappa$ closed in $V$ (and so $\forces_{\Q_1}$``\hspace{2 pt}$\dot\Q_2*\dot\Add(\kappa,1)$ is ${\leq}|\Q_1|$ strategically closed'').  Note that in the definition of closure point forcing, the second factor $\dot \Q_2$ is allowed to be trivial, which shows that the lemma holds for any extension $W\subset V$ by small forcing (below the size of $\kappa$).
\end{proof}

This result leaves open the question of extensions $W\subset V$ satisfying the $\kappa$-approximation property that do not arise via closure point forcing (such extensions are also common, see \cite{Hamkins2003:ExtensionsWithApproximationAndCoverProperties}).  In particular, I suspect the following:

\begin{conjecture}If $W \subset V$ satisfies the $\kappa$-cover and approximation properties, then forcing with $\Add(\kappa,1)^V$ does not add a generic for $\Add(\kappa,1)^W$.
\end{conjecture}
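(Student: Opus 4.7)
The plan is to combine the paper's Observation with a direct argument that $W\subseteq V[G]$ inherits the $\kappa$-approximation property (indeed, only for subsets of $\kappa$, which is what the Observation requires), where $G$ is $V$-generic for $\Add(\kappa,1)^V$. The $\kappa$-cover half of this is straightforward: since $\Add(\kappa,1)^V$ is $<\kappa$-closed in $V$, it is $<\kappa$-distributive, so every $A\in V[G]$ with $|A|^{V[G]}<\kappa$ already lies in $V$, and $\kappa$-cover of $W\subseteq V$ then supplies the required small $W$-covering set.

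For the approximation property I would argue by contradiction. Suppose $H\in V[G]$ is $W$-generic for $\Add(\kappa,1)^W$ and set $f=\bigcup H:\kappa\to 2$. A standard density argument gives $f\notin W$, while for each $\alpha<\kappa$ the initial segment $f\upharpoonright\alpha$ is a condition in $H$ and hence lies in $W$. Since $\kappa$ remains regular in $W$, every $B\in W$ with $|B|^W<\kappa$ is bounded in $\kappa$, so $f\cap B$ is determined by some $f\upharpoonright\alpha$ and therefore lies in $W$. Thus $f$ already satisfies the hypothesis of the $\kappa$-approximation property relative to $W$; if one could establish $f\in V$, the $\kappa$-approximation property of $W\subseteq V$ would immediately give $f\in W$, the desired contradiction.

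The main obstacle is precisely the implication $f\in V[G]\Rightarrow f\in V$. The generic function $\bigcup G$ itself exhibits a subset of $\kappa$ in $V[G]$ whose initial segments all lie in $V$ but which is not itself in $V$, so the stronger hypothesis --- that initial segments lie in the strictly smaller model $W$ --- must be exploited essentially. My proposed approach is a name analysis: fix a nice $\Add(\kappa,1)^V$-name $\dot f$, observe that for each $\alpha<\kappa$ there is a $V$-dense collection of conditions deciding $\dot f\upharpoonright\check\alpha=\check{a_\alpha}$ for some $a_\alpha\in W$, and try to patch this $\kappa$-indexed family of $W$-valued decisions into a single object of $V$ determining $f$. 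The $<\kappa$-closure of the forcing together with $\kappa$-cover --- which sharply restricts how many $W$-sets can realize each decision --- should play a role analogous to the closure-point factorization in Hamkins' Lemma~\ref{Lemma.ClosurePointForcing}, even though $W\subseteq V$ need not itself be a closure-point extension. An alternative route would be a general preservation principle asserting that the $\kappa$-cover and approximation properties of $W\subseteq V$ propagate to $W\subseteq V[G]$ under any $<\kappa$-closed forcing over $V$; since the case $W=V$ is defeated by the generic itself, any such proof must genuinely invoke the gap $W\neq V$ supplied by the hypotheses.
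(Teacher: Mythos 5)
First, note that this statement appears in the paper as an open \emph{conjecture}; the paper contains no proof of it, so the only question is whether your argument closes it. It does not. Your reduction is the natural (and surely intended) one: by the Observation preceding the conjecture, it suffices to show that $W\subseteq V[G]$ has the $\kappa$-approximation property for subsets of $\kappa$, and your analysis of a putative generic $f=\bigcup H$ correctly isolates the single missing implication, namely that a function $f\colon\kappa\to 2$ in $V[G]$ all of whose initial segments lie in $W$ must already lie in $V$ (after which the approximation property of $W\subseteq V$ finishes the job). Everything up to that point is sound: $f\notin W$, each $f\upharpoonright\alpha$ lies in $W$, and $f\cap B\in W$ for every $B\in W$ with $|B|^W<\kappa$ because $\kappa$ is regular in $W$.

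But that missing implication is the entire content of the conjecture, and the sketch you offer for it does not work as described. The patching idea founders on exactly the phenomenon you flag for the generic itself: there may be up to $\left(2^{<\kappa}\right)^W$-many candidates in $W$ for the various $\dot f\upharpoonright\alpha$, and a single condition can have incompatible extensions deciding $\dot f\upharpoonright\alpha$ to be different elements of $W$. The $\kappa$-cover property does not ``restrict how many $W$-sets can realize each decision''; it only says that any $<\kappa$-sized collection of them lying in $V$ is covered by a $<\kappa$-sized set in $W$, which is no obstruction to splitting. To make the argument go, one would have to rule out a $<\kappa$-closed splitting tree of conditions all of whose branches assign values inside $\left({}^{<\kappa}2\right)^W$, and that requires genuine combinatorial input about how $\left({}^{<\kappa}2\right)^W$ sits inside $\left({}^{<\kappa}2\right)^V$ --- this is precisely where the hypothesis $W\neq V$ would have to enter, and nothing in your proposal supplies it. The alternative route you mention, a general preservation principle for the approximation property under $<\kappa$-closed forcing over $V$, is (as you yourself observe) false in the limiting case $W=V$, so it cannot be quoted off the shelf. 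The gap is genuine and coincides with the open problem.
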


No example has been found in which forcing with $\Add(\kappa,1)^V$ adds a generic for $\Add(\kappa,1)^W$ (except when the posets are equal), but it is not clear whether this holds generally.

\begin{question}Is it consistent that $\Add(\kappa,1)^V \neq \Add(\kappa,1)^W$ and forcing with $\Add(\kappa,1)^V$  adds a generic for $\Add(\kappa,1)^W$?
\end{question}


\section{Conclusion}

There are several natural applications. Recent joint work with Kameryn Williams on the Inner Mantle sequence and Iterated $\HOD$ sequence \cite{ReitzWilliams:InnerMantlesandIteratedHOD} relies on the notion of generalized Cohen iterations to establish $\ZFC$-preservation.  Another possible application is a robust coding method, not disturbed by small forcing, that is flexible and compatible with any $GCH$ pattern.

In addition, the results and techniques are flexible, and should be readily adaptable to other types of forcing from inner models (e.g. Collapse forcing, etc.).  In general we might consider a forcing notion $\P$ defined by $\phi(x,\bar a)$, and an inner model $W\subset V$ with $\bar a \in W$, and compare the effects of forcing with $\P$ versus the poset $\P^W$ defined by $\phi^W(x,\bar a)$.

\begin{question}\label{Question.DoesPforcingfromWaddagenericforV}
Does forcing with $\P^W$ add a generic for $\P$?
\end{question}

In particular, does the ``upwards propagation'' phenomenon established by Theorem \ref{Theorem.WhenDoesCohenForcingFromWAddAGenericForV} hold for other types of forcing?

\begin{question}\label{Question.DoesPforcingfromVaddagenericforW}Does forcing with $\P$ add a generic for $\P^W$?
\end{question}

Do we see the same ``downwards non-propagation'' phenomenon described in Section  \ref{Section.WhendoesCohenforcingfromVaddagenericforW}?



\printbibliography
\end{document}